\renewcommand{\thepage}{\arabic{page}}
\newtheorem{eje}{{\it \textbf{Example} }}[section]
\newtheorem{teo}{{\it \textbf{Theorem}}}
\newtheorem{lem}{{\it \textbf{Lemma}}}
\newtheorem{rem}{{\it \textbf{Remark}}}
\providecommand{\keywords}[1]
{
  \small	
  \textbf{\textit{Keywords---}} #1
}
\providecommand{\Classification}[1]	
{
  \small	
  \textbf{\textit{Classification---}} #1
}
\title{{\Large Dynamics of tritrophic interaction with volatile compounds in plants}}
\author{{\normalsize Arturo J. Nic-May$^*$ and Eric J. Avila-Vales$^*$}\\
        {\small$^*$ Facultad de Matemáticas, Universidad Autónoma de Yucatán, Anillo Periférico Norte,}\\{\small Tablaje 13615, Mérida, Yucatán C.P. 97119, México} \\
        {\small  E-mail addresses: arturo\_javier\_1559@hotmail.mx, avila@correo.uady.mx} 
}
\date{} 
\begin{document}
\maketitle
\textbf{Abstract:} In this paper we will consider a mathematical model that describes, the tritrophic interaction between plants, herbivores and their natural enemies, where volatiles organic compounds (VOCs) released by plants play an important role. We show positivity and boundedness of the system solutions, existence of positive equilibrium and its local stability, we analyse global stability of positive equilibrium via the geometrical approach of Li and Muldowney. We pay attention to parameters in order to discuss different  types of bifurcations. Finally, we present some numerical simulations to justify our analytical results.\\
\\
\keywords{Tritrophic model, Global stability, Bifurcation.}\\
\\
\Classification{92D40, 34D23, 34C23.}
    
\newpage
\section{Introduction}
In agronomy, tritrophic interactions between crop, herbivores and their natural
enemies are one of the drivers of the crop yield. Understanding and manipulating
these interactions in order to produce food more sustainably is the basic principle
of biological control of pest [1]. The plants emit a blend of different Volatile Organic
Compounds (VOCs), Some applications of plant VOCs in agriculture are: isoprenoids
emitted by leaves can exert a protective effect against abiotic stresses by quenching
ROS or by strengthening the cell membranes, some VOCs are able to inhibit
germination and growth of plant pathogens in vitro, herbivore repellency and attraction
of herbivores parasitoids on infested plants are probably the most known
capacity of VOCs [2]. For example, when spider mites damage lima beans and
apple plants, they attract predatory mites by ge\-ne\-ra\-ting VOCs [3]. Corn and cotton
plants also propagate volatiles to call hymenopterous parasitoids which demolish
larvae of several Lepidoptera species [4].

The use of the products chemicals in agriculture has caused serious problems
with food safety and environmental pollution. Thus the agriculture is called
to provide new solutions to increase yields while preserving natural resources and
the environment [2]. For this, various models [5,6,7] have addressed on indirect defense
mechanism of plant population (Vocs). Unlike the models proposed, we
consider the attraction constant, due to VOCs.

In this paper, we consider the model proposed in [1], given by three ordinary
differential equations describing the tritrophic interaction between crop, pest
and the pest natural enemy, in which the release of Volatile Organic Compounds
(VOCs) by crop to attract the pest natural enemy is explicitly taken into account.
Our purpose is to perform a more detailed mathematical analysis of the model
proposed that includes an analysis of different types of bifurcations.

The rest of the paper is organized as follows: The model is introduced in Section
2. Positivity and boundedness of solutions of system are given in Section 3.
Dynamical behavior of the system is investigated in Section 4. Bifurcation phenomenon,
is es\-ta\-bli\-shed in Section 5. Numerical examples are presented in Section
6. A brief discussion is presented in Section 7.

\section{Model}
The model of tritrophic interaction among plants, herbivores and carnivores is
described by following three Ordinary Differential Equations:
\begin{eqnarray}
\frac{\text{d}x}{\text{d}t}&=& rx\left(1-\frac{x}{K}\right)-a\frac{xy}{h+x} \nonumber\\
\frac{\text{d}y}{\text{d}t}&=& y\left(ae\frac{x}{h+x}-m-p\frac{z}{l+y}\right) \nonumber\\
\frac{\text{d}z}{\text{d}t}&=& x\left( b+c\frac{y}{k+y}\right)+z\left(pq\frac{y}{l+y}-n\right). 
\end{eqnarray}
Where all the parameters are positive except $b \geq 0$ and $c \geq 0$, and biological significance are given below:
\begin{itemize}
\item $x$ is the crop population size.
\item $y$ is the aphid population size.
\item $z$ is  the aphid-natural enemy population size.
\item $r$ is the crop growth rate.
\item $K$ is the crop carrying capacity.
\item $a$ is the maximal harvesting rate of crop by aphids.
\item $e$ is the crop to aphids conversion (yield).
\item $m$ is the aphids' natural mortality rate. 
\item is the $p$ maximal uptake rate of aphid by aphid-natural enemy. 
\item $h$, $k$ and $l$ are the half saturation constants.
\item $b$ is the attraction constant due to VOCs.
\item $c$ is the enhanced attraction rate of aphid-natural enemy by VOCs released by crops under aphid attack.
\item $q$ is the aphids to aphid-natural enemy conversion (yield).
\item $n$ is the aphid-natural enemy mortality rate.
\end{itemize}
\section{Positivity and boundedness of solutions}
In this section, we shall first show positivity and boundedness of solutions of system
(1). These are very important so far as the validity of the model is related.
We first study the positivity.

\begin{lem}
All solutions $(x(t), y(t), z(t))$ of system (1) with initial value\\
$(x_0 , y_0 , z_0 ) \in \mathbb{R}^3_+$, remains positive for all $t > 0$.
\end{lem}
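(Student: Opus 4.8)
The plan is to exploit the structure of (1): the $x$- and $y$-equations each have the unknown as a common factor on the right-hand side, while the $z$-equation is linear in $z$ with a nonnegative forcing term, so that each component admits an integrating-factor representation that is manifestly positive. First I would invoke the standard existence and uniqueness theorem to obtain a unique maximal solution on an interval $[0,T)$, $T\le\infty$, and then argue on $[0,T)$ by a first-exit-time device. Set
\[
t_1 = \sup\{\, t\in[0,T) : x(s)>0,\ y(s)>0,\ z(s)>0 \text{ for all } s\in[0,t] \,\}.
\]
Since $x_0,y_0,z_0>0$ and the solution is continuous, $t_1>0$; the goal is to show $t_1=T$.

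On $[0,t_1)$ all three components are positive, hence $h+x>h>0$, $k+y>k>0$, $l+y>l>0$, so no denominator in (1) vanishes and the solution stays bounded on every compact subinterval. There one may rewrite
\[
x(t) = x_0\,\exp\!\Big(\int_0^t\!\Big[r\big(1-\tfrac{x(s)}{K}\big)-\tfrac{a\,y(s)}{h+x(s)}\Big]ds\Big),\qquad
y(t) = y_0\,\exp\!\Big(\int_0^t\!\Big[\tfrac{ae\,x(s)}{h+x(s)}-m-\tfrac{p\,z(s)}{l+y(s)}\Big]ds\Big),
\]
and, treating the $z$-equation as the linear ODE $\dot z = B(t)z+A(t)$ with $A(t)=x(t)\big(b+\tfrac{c\,y(t)}{k+y(t)}\big)\ge 0$ and $B(t)=\tfrac{pq\,y(t)}{l+y(t)}-n$,
\[
z(t) = e^{\int_0^t B(s)ds}\Big(z_0+\int_0^t A(s)\,e^{-\int_0^s B(u)du}\,ds\Big).
\]
Because $x_0,y_0,z_0>0$, the exponentials are positive and $A\ge 0$, all three right-hand sides are strictly positive on $[0,t_1)$.

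Suppose now $t_1<T$. The integrands appearing above are continuous on the compact interval $[0,t_1]$, so letting $t\to t_1^-$ in the three formulas yields $x(t_1),y(t_1),z(t_1)>0$. By continuity the solution would then remain strictly positive on a slightly larger interval, contradicting the maximality in the definition of $t_1$. Hence $t_1=T$, so $x,y,z>0$ on $(0,T)$; combined with the a priori bounds established in the next section this gives $T=\infty$, which proves the claim.

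The step I expect to be the main obstacle is precisely this bootstrapping: the exponential and variation-of-parameters formulas are legitimate only where the vector field is non-singular, i.e.\ away from $\{x=-h\}\cup\{y=-l\}\cup\{y=-k\}$, and showing that the trajectory never reaches that set is essentially the positivity we are trying to prove — the first-exit-time argument is what breaks the circularity. The $z$-component is the only one needing a genuine inhomogeneous linear equation rather than a pure exponential, but since its forcing term $A(t)$ is a product of nonnegative quantities ($x\ge 0$, $b\ge 0$, $c\ge 0$, $y\ge 0$) positivity is preserved there as well.
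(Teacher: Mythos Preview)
Your argument is correct and is in fact more careful than the paper's. The paper treats $x$ and $y$ exactly as you do, via the exponential representations
\[
x(t)=x_0\exp\!\Big(\int_0^t\big[r(1-\tfrac{x}{K})-\tfrac{ay}{h+x}\big]\,ds\Big),\qquad
y(t)=y_0\exp\!\Big(\int_0^t\big[\tfrac{aex}{h+x}-m-\tfrac{pz}{l+y}\big]\,ds\Big),
\]
but for $z$ it does not use variation of parameters; instead it simply drops the nonnegative forcing term $x\big(b+\tfrac{cy}{k+y}\big)$ to obtain the differential inequality $\dot z\ge z\big(\tfrac{pqy}{l+y}-n\big)$ and concludes $z(t)\ge z_0\exp\!\big(\int_0^t[\tfrac{pqy}{l+y}-n]\,ds\big)>0$. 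Your explicit formula gives the same conclusion with a bit more machinery; the paper's shortcut is a one-line comparison and is what one usually sees. Conversely, the paper says nothing about local existence, well-posedness of the integrands, or the first-exit-time bootstrap you introduce to break the apparent circularity; your treatment of that point is a genuine improvement in rigor over the original.
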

\begin{proof}
The positivity of $x(t)$ and $y(t)$ can be verified by the equations
\begin{eqnarray}
x(t)&=&x_0 \exp\left(\int_0^t \left[ r-\frac{rx(s)}{K}-a\frac{y(s)}{h+x(s)}\right]\text{d}s\right),\nonumber\\
y(t)&=&y_0 \exp\left(\int_0^t \left[ ae\frac{x(s)}{h+x(s)}-m-p\frac{z(s)}{l+y(s)}\right]\text{d}s\right).\nonumber
\end{eqnarray}
Also if $x(0) = x_0 > 0$ and $y(0) = y_0 > 0$, then $x(t) > 0$ and $y(t)>0$ for all $t > 0$.
The positivity of $z(t)$ can be easily deduced from the third equation of system (1). We observe that
\begin{equation}
\frac{\text{d}z}{\text{d}t}\geq z \left(pq\frac{y}{l+y}-n\right). \nonumber
\end{equation}
Then.
\begin{equation}
z(t)\geq  z_0 \exp\left(\int_0^t \left[ pq\frac{y}{l+y}-n\right]\text{d}s\right). \nonumber
\end{equation}
if $z(0) = z_0 > 0$, then $z(t)>0$ for all $t>0$.
\end{proof}
\begin{lem} All the solutions of system (1) will lie in the region $\Omega=\{(x,y,z) | x \leq K_1, \ ex+y+\frac{1}{q}z \leq \left( er+\frac{b+c}{q}+1 \right)\frac{K_1}{\delta} \}$, where $\delta=\min \{\frac{1}{e}, \ m, \ n \}$ and $K_1=\max \{x_0, K \}$.
\end{lem}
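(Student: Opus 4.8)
The plan is to bound the crop component first and then control a suitable linear combination of the three populations. For the inclusion $x\le K_1$ I would use only the first equation: since positivity (Lemma 1) gives $-a\frac{xy}{h+x}\le 0$ along solutions, we have $\dot x\le rx\big(1-\frac{x}{K}\big)$, and a standard differential‑inequality comparison with the logistic equation yields $x(t)\le\max\{x_0,K\}=K_1$ for all $t\ge 0$. That is exactly the first defining inequality of $\Omega$, and it is the only place the explicit value $K_1=\max\{x_0,K\}$ is needed.

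For the second inclusion I would introduce the auxiliary function $W(t)=ex(t)+y(t)+\tfrac1q z(t)$ and differentiate it along trajectories. The point that really makes the lemma work is the choice of the weights $e$ and $1/q$: they are precisely what is needed for the Holling‑type predation and conversion terms to cancel in pairs, since $e\cdot\big(-a\tfrac{xy}{h+x}\big)+\big(aey\tfrac{x}{h+x}\big)=0$ and $\big(-py\tfrac{z}{l+y}\big)+\tfrac1q\big(pqz\tfrac{y}{l+y}\big)=0$. After these cancellations one is left with $\dot W=erx\big(1-\tfrac xK\big)-my+\tfrac bq x+\tfrac cq\tfrac{xy}{k+y}-\tfrac nq z$.

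Next I would add $\delta W$ to this identity with $\delta=\min\{1/e,\,m,\,n\}$ and estimate term by term. The choices $\delta\le m$ and $\delta\le n$ make $-(m-\delta)y$ and $-\tfrac{n-\delta}{q}z$ nonpositive, hence discardable (again using positivity); the bound $x\le K_1$ from the first step gives $erx(1-\tfrac xK)\le erK_1$, $\ \tfrac bq x\le\tfrac{b}{q}K_1$ and $\tfrac cq\tfrac{xy}{k+y}\le\tfrac cq x\le\tfrac cq K_1$ (using $\tfrac{y}{k+y}\le1$); and the choice $\delta\le 1/e$ gives $\delta e x\le K_1$. Collecting these yields $\dot W+\delta W\le\big(er+\tfrac{b+c}{q}+1\big)K_1$. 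A final application of the comparison (Gronwall) lemma gives $W(t)\le W(0)e^{-\delta t}+\big(er+\tfrac{b+c}{q}+1\big)\tfrac{K_1}{\delta}\big(1-e^{-\delta t}\big)$, so $\limsup_{t\to\infty}W(t)\le\big(er+\tfrac{b+c}{q}+1\big)\tfrac{K_1}{\delta}$; if one wants membership for all $t\ge0$ one restricts to initial data with $W(0)\le\big(er+\tfrac{b+c}{q}+1\big)\tfrac{K_1}{\delta}$, making $\Omega$ positively invariant.

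I expect no serious obstacle: once the weights $e$ and $1/q$ are in place the cancellations are automatic and the remaining estimates are elementary. The only delicate point is the bookkeeping — verifying that each of the three constants defining $\delta$ is used for exactly the term it is meant to absorb ($1/e$ for $\delta ex$, $m$ for the $y$‑term, $n$ for the $z$‑term) — together with a correct appeal to Lemma 1 so that all the discarded terms genuinely have the sign claimed.
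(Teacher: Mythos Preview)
Your proposal is correct and follows essentially the same route as the paper: bound $x$ by comparison with the logistic equation, form the weighted sum $W=ex+y+\tfrac1q z$ so the Holling and predation terms cancel, and derive the linear differential inequality $\dot W+\delta W\le\big(er+\tfrac{b+c}{q}+1\big)K_1$. Your write-up is in fact slightly more careful than the paper's in two respects: you obtain $x(t)\le K_1$ for all $t\ge0$ (the paper only states the $\limsup$), and you make explicit that the conclusion is ultimate boundedness unless one starts in $\Omega$.
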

\begin{proof}
Let $(x(t), y(t), z(t))$ be any solution of system (1) with positive initial conditions
$(x_0, y_0, z_0)$ . Since, $\displaystyle \frac{dx}{dt} \leq rx(1 - \frac{x}{K})$, by a standard comparison theorem we have, $\displaystyle \lim_{t\to \infty} \sup x(t)\leq K_1$.\\
Let $N(t)=ex+y+\frac{1}{q}z$, Then
\begin{eqnarray}
\dot{N}&=&e\left(rx\left(1-\frac{x}{K}\right)-a\frac{xy}{h+x} \right)+y\left(ae\frac{x}{h+x}-m-p\frac{z}{l+y}\right)\nonumber \\
&&+\frac{1}{q}\left(x\left( b+c\frac{y}{k+y}\right)+z\left(pq\frac{y}{l+y}-n\right) \right)\nonumber \\
&=&e\left(rx\left(1-\frac{x}{K}\right)\right)-my+\frac{1}{q}\left(x\left( b+c\frac{y}{k+y}\right)-nz \right)\nonumber \\
&\leq& \left( er+\frac{b}{q}+\frac{c}{q} \right) x -my-\frac{n}{q}z\nonumber\\
&=&\left( er+\frac{b+c}{q}+1 \right) x-x -my-\frac{n}{q}z\nonumber\\
&\leq & \left( er+\frac{b+c}{q}+1 \right)K_1-\delta N \nonumber.
\end{eqnarray}
By using the comparison theorem we have $0 \leq N(t) \leq \left( er+\frac{b+c}{q}+1 \right)\frac{K_1}{\delta}$ for t sufficiently large, so all solutions of $(1)$ are ultimately bounded and enter the region $\Omega $.
\end{proof}
\section{Dynamical behavior}
\subsection{Equilibria}
Here we discuss existence condition of interior equilibrium point of system (1). The
system has one trivial equilibrium point (the ecosystem collapse) $E_0 = (0,0, 0)$,
the aphid-free point $E_1 = (x_1, 0, z_1)$. Where,
\begin{center}
$x_1=K$, $z_1= \displaystyle \frac{b}{n}K$
\end{center}
It follows that the point $E_1$ always exists. And coexistence $E^* = (x^*, y^*, z^*)$, where,
\begin{equation}
y^*=\frac{1}{a}\left[r\left(1-\frac{x}{K}\right)(h+x) \right].
\end{equation}
Which is nonnegative only for $0 \leq x \leq K$,
\begin{equation}
z^*=\frac{l+y}{p}\left[ae\frac{x}{h+x}-m \right].
\end{equation}
This function is nonnegative if $aex \geq m(h + x)$.

Then $y^*$ and $z^*$ are nonnegative if and only if $ae>m$ and $\frac{mh}{ae-m}\leq x^* \leq K$. With $x^*$ being determined by the roots of the equation.
\begin{eqnarray}
H(x)&=&x\left( b+c\frac{y}{k+y}\right)+z\left(pq\frac{y}{l+y}-n\right)\nonumber\\
&=&x\left( b+c\frac{\frac{1}{a}\left[r\left(1-\frac{x}{K}\right)(h+x)) \right]}{k+\frac{1}{a}\left[r\left(1-\frac{x}{K}\right)(h+x)) \right] }\right)+\left( \frac{l+\frac{1}{a}\left[r\left(1-\frac{x}{K}\right)(h+x)) \right]}{p}\right)\nonumber \\
&&\times \left[ae\frac{x}{h+x}-m \right]\left(pq\frac{\frac{1}{a}\left[r\left(1-\frac{x}{K}\right)(h+x)) \right]}{l+\frac{1}{a}\left[r\left(1-\frac{x}{K}\right)(h+x)) \right]}-n\right). \nonumber
\end{eqnarray}
note that $H\left(\frac{mh}{ae-m}\right)>0$  and 
\begin{eqnarray}
H(K)&=&bK-l \frac{n}{p}\left(ae\frac{K}{h+K}-m \right).
\end{eqnarray}
If 
\begin{center}
$ae>m$ and $\displaystyle \frac{aeK}{h+K}\geq m+b\frac{Kp}{ln}.$
\end{center}
Then $H(K)\leq 0$ and by its continuity, the function f must have a zero $x^*$ in the interval $[\frac{mh}{ae-m}, K]$.
\subsection{Local stability}
We now study the local stability of $E_0$, $E_1$ and $E^*$ of Model (1).
\begin{teo} \textit{$E_0=(0,0,0)$ is unstable.}
\end{teo}
\begin{proof}
The Jacobian matrix of the model, we get as follows:

\begin{eqnarray}
J:= \left( \begin{matrix}
r-2\frac{rx}{K}-\frac{ay}{h+x}+\frac{axy}{(h+x)^2}  & -\frac{ax}{h+x} &0 \\
\frac{aehy}{(h+x)^2} & \frac{aex}{h+x}-\frac{lpz}{(l+y)^2}-m&-\frac{py}{l+y} \\
b+c\frac{y}{k+y}& \frac{ckx}{(k+y)^2}+\frac{lpqz}{(l+y)^2} & \frac{pqy}{l+y}-n \end{matrix} \right).
\end{eqnarray}
\begin{eqnarray}
J_{E_0}:= \left( \begin{matrix}
r  & 0 &0 \\
0 & -m&0 \\
b& 0 & -n 
\end{matrix} \right).
\end{eqnarray}
The characteristic equation at $E_0$ is
$$(\lambda-r)(\lambda+m)(\lambda+n)=0$$
Since one of the roots of the above equation is positive, $E_0$ is unstable.
\end{proof}
\begin{teo} If $\frac{aeK}{h+K}<\frac{pbK}{nl}+m$, then $E_1=(K,0,\displaystyle \frac{b}{n}K)$ is locally asymptotically stable. If $\frac{aeK}{h+K}>\frac{pbK}{nl}+m$, then $E_1$ is unstable.
\end{teo}
\begin{proof}
\begin{eqnarray}
J_{E_1}:=  \left( \begin{matrix}
-r  & -\frac{aK}{h+K} &0 \\
0 & \frac{aeK}{h+K}-\frac{pbK}{nl}-m& 0 \\
b& \frac{cK}{k}+\frac{pqbK}{nl} & -n
\end{matrix} \right).
\end{eqnarray}
The characteristic equation at $E_0$ is
$$(\lambda+r)(\lambda-\frac{aeK}{h+K}+\frac{pbK}{nl}+m)(\lambda+n)=0.$$
If $\frac{aeK}{h+K}<\frac{pbK}{nl}+m$ then all the roots of the above equation are negative and hence $E_1$ is locally asymptotically stable. If $\frac{aeK}{h+K}>\frac{pbK}{nl}+m$, since one of the roots of
the above equation is positive, then $E_1$ is unstable.\\
\end{proof}
The Jacobian matrix of the model (1) for the equilibrium point $E^*$ is given by
\begin{eqnarray}
J_{E^*}:= \left( \begin{matrix}
A_{11}  & A_{12} &0 \\
A_{21} & A_{22} & A_{23} \\
A_{31}& A_{32} & A_{33} \end{matrix} \right).
\end{eqnarray}
Where,
\begin{eqnarray}
A_{11}&=&r-2\frac{rx^*}{K}-\frac{ay^*}{h+x^*}+\frac{ax^*y^*}{(h+x)^2}  \nonumber \\
A_{12}&=&-\frac{ax^*}{h+x^*}<0 \nonumber \\
A_{21}&=&\frac{aehy}{(h+x^*)^2}>0 \nonumber\\
A_{22}&=&\frac{aex^*}{h+x^*}-\frac{lpz^*}{(l+y^*)^2}-m \nonumber\\
A_{23}&=&-\frac{py^*}{l+y^*}<0 \nonumber
\end{eqnarray}
\begin{eqnarray}
A_{31}&=&b+c\frac{y^*}{k+y^*}>0 \nonumber\\
A_{32}&=&\frac{ckx^*}{(k+y^*)^2}+\frac{lpqz^*}{(l+y^*)^2}>0 \nonumber\\
A_{33}&=&\frac{pqy^*}{l+y^*}-n. \nonumber
\end{eqnarray}

The characteristic equation at $E^*$ is
$$\lambda^3 +a_1\lambda^2 +a_2 \lambda+a_3=0.$$
Where
\begin{eqnarray}
a_1 &=&-A_{11}-A_{22}-A_{33}\nonumber\\
a_2&=&A_{11}A_{22}+A_{11}A_{33}+A_{22}A_{33}-A_{12}A_{21}-A_{23}A_{32}  \nonumber\\
a_3&=&-A_{11}A_{22}A_{33}-A_{12}A_{23}A_{31}+A_{12}A_{21}A_{33}+A_{11}A_{23}A_{32}.
\nonumber
\end{eqnarray}
Also
\begin{eqnarray}
a_1 a_2-a_3&=&\left( -A_{11}-A_{22}-A_{33}\right)\left(A_{11}A_{22}+A_{11}A_{33}+A_{22}A_{33}-A_{12}A_{21}-A_{23}A_{32} \right)\nonumber\\
&+&A_{11}A_{22}A_{33}+A_{12}A_{23}A_{31}-A_{12}A_{21}A_{33}-A_{11}A_{23}A_{31}\nonumber \\
&=&-A_{11}A_{22}A_{33}+A_{11}A_{23}A_{32} + -A_{11}\left(A_{11}A_{22}+A_{11}A_{33}+-A_{12}A_{21} \right)\nonumber\\
&&-A_{22}\left(A_{11}A_{22}+A_{11}A_{33}+A_{22}A_{33}-A_{12}A_{21}-A_{23}A_{32} \right)\nonumber\\
&&+A_{33}A_{12}A_{21} -A_{33}\left(A_{11}A_{22}+A_{11}A_{33}+A_{22}A_{33}-A_{23}A_{32} \right)\nonumber\\
&+&A_{11}A_{22}A_{33}+A_{12}A_{23}A_{31}-A_{12}A_{21}A_{33}-A_{11}A_{23}A_{32}\nonumber \\
&=&-A_{11}\left(A_{11}A_{22}+A_{11}A_{33}+-A_{12}A_{21} \right)\nonumber\\
&&-A_{22}\left(A_{11}A_{22}+A_{11}A_{33}+A_{22}A_{33}-A_{12}A_{21}-A_{23}A_{32} \right)\nonumber\\
&&-A_{33}\left(A_{11}A_{22}+A_{11}A_{33}+A_{22}A_{33}-A_{23}A_{32} \right)+A_{12}A_{23}A_{31}.\nonumber
\end{eqnarray}
Now by Routh–Hurwitz criterion, it follows, that all roots of $\lambda^3 +a_1\lambda^2 +a_2 \lambda+a_3$ have negative real parts if and only if $a_i > 0$ for $i = 1, 2, 3$ and $a_1a_2 - a_3 > 0$. From above analysis, we now state the following Remarks.
\begin{rem}
If the interior equilibrium point $E^*$ exists then it is locally asymptotically stable if if the following conditions hold: $a_i > 0$ for $i = 1, 2, 3$ and $a_1a_2 - a_3 > 0$.
\end{rem}
\begin{rem}
If $r+\frac{ax^*y^*}{(h+x^*)^2}<2\frac{rx^*}{K}+\frac{ay^*}{h+x^*}$, $\frac{aex^*}{h+x^*}<\frac{lpz^*}{(l+y^*)^2}+m$ and $\frac{pqy^*}{l+y^*}<n$ and $-A_{11}A_{22}A_{33}-A_{12}A_{23}A_{31}+A_{12}A_{21}A_{33}+A_{11}A_{23}A_{32}>0$, then $E^*$ is locally asymptotically stable.
\end{rem}
\subsection{Global stability}
We now study the global stability of endemic equilibria of model (1). We used a
high-dimensional Bendixson criterion of Li and Muldowney [8].

\begin{teo} Suppose $\frac{aeK}{h+K}>\frac{pbK}{nl}+m$ then system (1) is uniformly persistent.
\end{teo}
\begin{proof}
Suppose $x_1$ is a point in the positive octant and $o(x_1)$ is the orbit through $x_1$ and $\omega $ is 
the omega limit set of the orbit through $x_1$. Note that $\omega(x_1)$ is bounded (Lemma 2). We claim 
that $E_0 \notin \omega(x_1)$. If $E_0 \in \omega(x_1)$ then by Butler-McGehee lemma [9], there exists a 
point $P$ in $\omega(x_1)\cap W^s(E_0)$ (which denotes stable manifold of $E_0$). Note that $o(P)$ lies in 
$\omega(x_1)$ and $W^s(E_0)$ is $\Pi_1=\{(0,y,z)| \ y \geq 0 \ \text{and} \ z \geq 0  \}$. Also if $P\in \Pi_1$, consider the following system
\begin{eqnarray}
\frac{\text{d}y}{\text{d}t}&=&v_1(y,z)= y\left(-m-p\frac{z}{l+y}\right) \nonumber\\
\frac{\text{d}z}{\text{d}t}&=&v_2(y,z)= z\left(pq\frac{y}{l+y}-n\right).  
\end{eqnarray}
Note that
\begin{itemize}
\item[•] If $y=0$ and $z>0$, then $v_1(y,z)=0$ and $v_2(y,z)<0$.
\item[•] If $z=0$ and $y>0$, then $v_1(y,z)<0$ and $v_2(y,z)=0$.  
\item[•] If $pq\leq n$, $z>0$ and $y>0$, then $v_1(y,z)<0$ and $v_2(y,z)<0$.  
\item[•] If $pq> n$, $z>0$ and $y>0$, then $v_1(y,z)<0$. Also, if $y>\frac{nl}{pq-n}$ then $v_2(y,z)>0$, if $y=\frac{nl}{pq-n}$ then $v_2(y,z)=0$ and if $0<y<\frac{nl}{pq-n}$ then $v_2(y,z)<0$.  
\end{itemize}
Hence the phase portrait of system (9) is shown in Figure 1, then if $P$ is on the positive side of the $y-axis$, then $o(P)$ is the positive side of the $y-axis$, this contradicts that $ \omega (x_1) $ is bounded. If $P$ is on the positive side of the $z-axis$, then $o(P)$ is the positive side of the $z-axis$, this contradicts that $ \omega (x_1) $ is bounded. Hence let $P=(0,P_2,P_3) \in \{(0,y,z)| \ y> 0 \ \text{and} z > 0\}$, then the orbit through $P$ must be unbounded, giving a contradiction.

\begin{figure}[h]
\centering
\includegraphics[scale=.5]{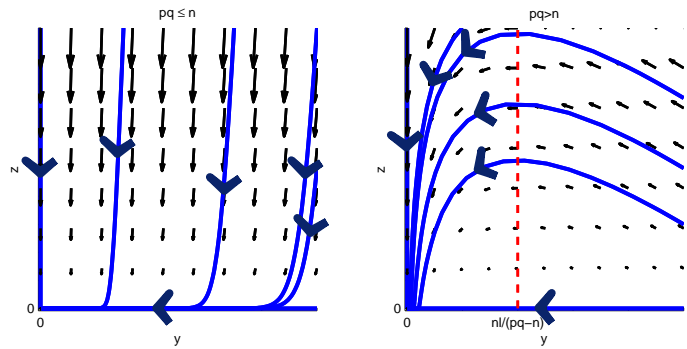}
\caption{The phase portrait of system (9)}
\end{figure}
Next, we show that $E_1 \notin \omega(x_1)$. If $E_1 \in \omega(x_1)$, since $\frac{aeK}{h+K}>\frac{pbK}{nl}+m$, $E_1$ is a saddle point. Then there exists a point $P$ in $\omega(x_1)\cap W^s(E_1)$. Note that $o(P)$ lies in  $\omega(x_1)$ and $W^s(E_1)$ is $\Pi_2=\{(x,0,z)| \ x> 0 \ \text{and} \ z \geq 0  \}$. Also if $P\in \Pi_2$. consider the following system
\begin{eqnarray}
\frac{\text{d}x}{\text{d}t}&=&w_1(x,z)= rx\left(1-\frac{x}{K}\right) \nonumber\\
\frac{\text{d}z}{\text{d}t}&=&w_2(x,z)= bx-nz. 
\end{eqnarray}
Note that

\begin{itemize}
\item[•] If $x>K$ then $w_1(x,z)<0$, if $x=K$ then $w_1(x,z)=0$ and if $0<x<K$ then $w_1(x,z)>0$.
\item[•] If $z>\frac{bx}{n}$ then $w_2(x,z)<0$, if $z=\frac{bx}{n}$ then $w_2(x,z)=0$ and if $z<\frac{bx}{n}$ then $w_2(x,z)>0$.  
\item[•] The linear system of the system (10) has as a stable separatrix, the line $z =\frac{bx}{r+n}$, however, if $z=\frac{bx}{r+n}$, then $w_1(x,z)=rx\left(1-\frac{x}{K}\right)$ and $w_2(x,z)=\frac{brx}{n+r}$, then the stable separatrix surface bends.  
\end{itemize}

\begin{figure}[h]
\centering
\includegraphics[scale=.3]{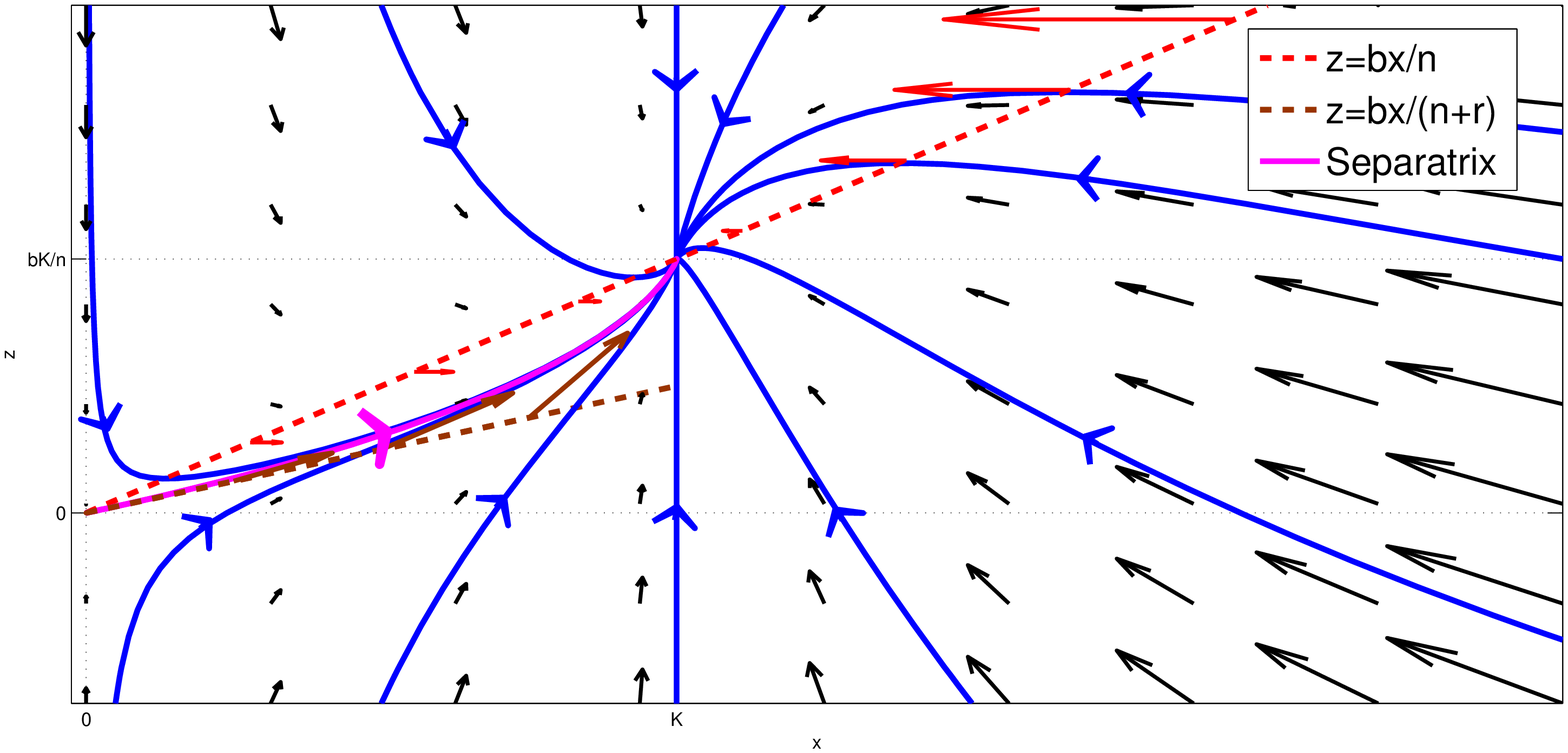}
\caption{The phase portrait of system (10)}
\end{figure}
Hence the phase portrait of system (10) is shown in Figure 2 and hence orbits in the plane emanate from either $E_0$ or an unbounded orbit lies in $\omega(x_1)$, once more a contradiction. There does not exist any equilibria in the two dimensional plane. Thus, $\omega(x_1)$ does not intersect any of the coordinate planes and hence system (1) is persistent. Since (1) is bounded, by main theorem in Butler et al. [10], this implies that the system is uniformly persistent.
\end{proof}
We will make use of the following theorem.
\begin{teo} \text{[8]} Suppose that the system $\dot{x}=f(x)$, with $f:D\subset\mathbb{R}^n\to\mathbb{R}^n$, satisfies the following:
    \begin{itemize}
        \item[\textbf{(H1)}] $D$ is a simply connected open set,
        \item[\textbf{(H2)}] there is a compact absorbing set $K\subset D$,
        \item[\textbf{(H3)}] $x^*$ is the only equilibrium in $D$.
    \end{itemize}
    Then the equilibrium $x^*$ is globally stable in $D$ if there exists a Lozinski\u{\i} measure $\mu_1$ such that
    \begin{equation}
    \limsup_{t\to\infty}\sup_{x_0\in K}\frac1t\int_0^t\mu_1\big(B(x(s,x_0))\big)\,\textup{d}s<0, \nonumber
    \end{equation}
    Where,
    \begin{equation}
    B=Q_fQ^{-1}+Q\frac{\partial f}{\partial x}^{[2]}Q^{-1} \nonumber
    \end{equation}
    And $Q\mapsto Q(x)$ is an ${n\choose2}\times{n\choose2}$ matrix-valued function.
\end{teo}
In our case, system (1) can be written as $\dot{x}=f(x)$ with $f:D\subset\mathbb{R}^3\to\mathbb{R}^3$ and $D$ being the interior of the feasible region $\Omega$. The existence of a compact absorbing set $K\subset D$ is equivalent to proving that (1) is uniformly persistent (Theorem 3). Hence, \textbf{(H1)} and \textbf{(H2)} hold for system (1), and by assuming the uniqueness of the endemic equilibrium in $D$, we can prove its global stability with the aid of Theorem 4.

\begin{teo} If
\begin{itemize}
\item[H1)] There exist positive numbers $\alpha$, $\beta$ and $\zeta$ such that 
$$\max \{ N_{11} +\frac{\alpha}{\beta} N_{12},\frac{\beta}{\alpha} N_{21} + N_{22}+\frac{\beta}{\zeta} N_{23},\frac{\zeta}{\alpha}N_{31} +\frac{\zeta}{\beta}N_{32}+N_{33}\}<0.$$
\item[H2)] $\displaystyle \frac{aeK}{h+K}>\frac{pbK}{nl}+m.$
\end{itemize}
Then $E^*$ is globally stable in $\mathbb{R}^3$.
\end{teo}
\begin{proof}
suppose that $x^*$ is the only equilibrium point in the interior of $\Omega $. By lemma 2 all solution of (1) is bounded, exists a time $T$ such that $x(t)<K_1$, $y(t)\leq M$, and $z(t)\leq qM$ (where $M=\left( er+\frac{b+c}{q}+1 \right)\frac{K_1}{\delta} )$, for $t > T$ and assumption (H2) implies that system (1) is uniformly persistent (Theorem 3) and hence there exists a time $T$ such that $x(t), y(t), z(t) > \eta >0$ for $t > T$.

Starting with the Jacobian matrix $J$ of (1). The Jacobian matrix of the model, we get as follows:

\begin{eqnarray}
J_{E^*}:= \left( \begin{matrix}
a_{11}  & a_{12} &0 \\
a_{21} & a_{22} & a_{23} \\
a_{31}& a_{32} & a_{33} \end{matrix} \right).
\end{eqnarray}
Where,
 \begin{eqnarray}
a_{11}&=&r-2\frac{rx}{K}-\frac{ahy}{(h+x)^2}  \nonumber \\
a_{12}&=&-\frac{ax}{h+x} \nonumber \\
a_{21}&=&\frac{aehy}{(h+x)^2} \nonumber\\
a_{22}&=&\frac{aex}{h+x}-\frac{lpz}{(l+y)^2}-m \nonumber\\
a_{23}&=&-\frac{py}{l+y} \nonumber\\
a_{31}&=&b+c\frac{y}{k+y} \nonumber\\
a_{32}&=&\frac{ckx}{(k+y)^2}+\frac{lpqz}{(l+y)^2} \nonumber\\
a_{33}&=&\frac{pqy}{l+y}-n. \nonumber
\end{eqnarray}
The second additive compound matrix of $J$ is given as follows:
\begin{eqnarray}
M:= \left( \begin{matrix}
M_{11}  & M_{12} &0 \\
M_{21} & M_{22} & M_{23} \\
M_{31}& M_{32} & M_{33} \end{matrix} \right).
\end{eqnarray}
Where,
\begin{eqnarray}
M_{11}&=&r-2\frac{rx}{K}-\frac{ahy}{(h+x)^2}+\frac{aex}{h+x}-\frac{lpz}{(l+y)^2}-m  \nonumber \\ 
M_{12}&=&-\frac{py}{l+y} \nonumber \\
M_{21}&=&\frac{ckx}{(k+y)^2}+\frac{lpqz}{(l+y)^2} \nonumber\\
M_{22}&=&r-2\frac{rx}{K}-\frac{ahy}{(h+x)^2}+\frac{pqy}{l+y}-n \nonumber\\  
M_{23}&=&-\frac{ax}{h+x} \nonumber\\
M_{31}&=&-b-c\frac{y}{k+y} \nonumber\\ 
M_{32}&=& \frac{aehy}{(h+x)^2} \nonumber\\ 
M_{33}&=&\frac{aex}{h+x}-\frac{lpz}{(l+y)^2}-m+\frac{pqy}{l+y}-n. \nonumber 
\end{eqnarray}
Note that, 
\begin{eqnarray}
M_{11}&\leq&r-2\frac{r\eta}{K}-\frac{ah\eta}{(h+K_1)^2}+\frac{aeK_1}{h+\eta}-\frac{lp\eta}{(l+M)^2}-m=N_{11}  \nonumber \\ 
M_{12}&\leq&-\frac{p\eta}{l+\eta}=N_{12} \nonumber \\
M_{21}&\leq&\frac{ck K_1}{(k+\eta)^2}+\frac{lpq^2 M}{(l+\eta)^2}=N_{21} \nonumber\\
M_{22}&\leq&r-2\frac{r \eta }{K}-\frac{ah\eta}{(h+K_1)^2}+\frac{pqM}{l+M}-n=N_{22} \nonumber\\  
M_{23}&\leq&-\frac{a\eta}{h+\eta}=N_{23} \nonumber\\
M_{31}&\leq&-b-c\frac{\eta}{k+\eta}=N_{31} \nonumber\\ 
M_{32}&\leq& \frac{aeh K_1}{(h+\eta)^2}=N_{32} \nonumber\\ 
M_{33}&\leq&\frac{ae K_1}{h+ K_1}-\frac{lp\eta}{(l+M)^2}-m+\frac{pq M}{l+M}-n=N_{33}. \nonumber 
\end{eqnarray}\\
We consider the following norm on $\mathbb{R}^3$.
\begin{equation}
\|z \|=\max \{\alpha |z_1| , \beta |z_2|,\zeta |z_3| \} \ \text{where $\alpha$,$\beta$, $\zeta>0$}. 
\end{equation}
The Lozinskiï measure $\bar{\mu}$ can be evaluated as,  
$$\bar{\mu}(Z)=\inf \{\bar{k}: D_{+} \|z \|\leq \bar{k} \|z \|,\ \text{for all solutions of $z'=Mz$} \}$$
Where $D_+$ is the right-hand derivative. The basic idea of the proof is to the obtain the estimate of the right-hand derivative $D_{+}\|z \|$ of the norm (13), we need to discuss three cases.
\begin{itemize}
\item Case 1: $\alpha |z_1|\geq \beta |z_2|,\zeta|z_3|.$
\end{itemize}  
Then $\|z\|=\alpha |z_1|$.\\
Thus, we have,\\
\begin{eqnarray}
D_{+} \|z \|&=&\alpha \frac{z_1}{|z_1|}z'_1\nonumber\\
&\leq&\alpha M_{11} z_1+\alpha M_{12}z_2 \nonumber\\
&\leq&\left( M_{11} +\frac{\alpha}{\beta} M_{12}\right) \|z\| \nonumber\\
&\leq&\left( N_{11} +\frac{\alpha}{\beta} N_{12}\right) \|z\|. \nonumber
\end{eqnarray}
\begin{itemize}
\item Case 2: $\beta |z_2|\geq \alpha|z_1|, \zeta|z_3|.$
\end{itemize}  
Then $\|z\|=\beta |z_2|$.

Thus, we have,
\begin{eqnarray}
D_{+} \|z \|&=&\beta \frac{z_2}{|z_2|}z'_2\nonumber\\
&\leq& \beta M_{21} z_1+\beta M_{22}z_2+\beta M_{23}z_3 \nonumber\\
&\leq&\left( \frac{\beta}{\alpha} M_{21} + M_{22}+\frac{\beta}{\zeta} M_{23}\right) \|z\| \nonumber\\
&\leq&\left( \frac{\beta}{\alpha} N_{21} + N_{22}+\frac{\beta}{\zeta} N_{23}\right) \|z\|. \nonumber
\end{eqnarray}

\begin{itemize}
\item Case 3: $\zeta |z_3|\geq \alpha |z_1|,\beta |z_2|$.
\end{itemize}  
Then $\|z\|=\zeta |z_3|$.\\
Thus, we have,\\
\begin{eqnarray}
D_{+} \|z \|&=&\zeta \frac{z_3}{|z_3|}z'_3\nonumber\\
&\leq& \zeta M_{31} z_1+\zeta M_{32}z_2+\zeta M_{33} z_3 \nonumber\\
&\leq& \left( \frac{\zeta}{\alpha}M_{31} +\frac{\zeta}{\beta}M_{32}+M_{33}\right) \|z\| \nonumber\\
&\leq& \left( \frac{\zeta}{\alpha} N_{31}+\frac{\zeta}{\beta}N_{32}+N_{33}\right) \|z\|. \nonumber
\end{eqnarray}
Therefore 
$$D_{+} \|z \|\leq L \|z \|.$$
Where:
$$L=\max \{ N_{11} +\frac{\alpha}{\beta} N_{12},\frac{\beta}{\alpha} N_{21} + N_{22}+\frac{\beta}{\zeta} N_{23},\frac{\zeta}{\alpha}N_{31} +\frac{\zeta}{\beta}N_{32}+N_{33}\}<0.$$
So 
\begin{equation}
\limsup_{t \to \infty} \sup_{x_0\in \omega}    \frac{1}{t}\int^t_0 \bar{\mu}(M) ds\leq \limsup_{t \to \infty} \sup_{x_0\in \omega}    \frac{1}{t}\int^t_0 L ds=L<0.   \nonumber
\end{equation}
By LI \& Muldowney[8] and theorem 4, the positive equilibrium point $E^*$ is globally stable in $\mathbb{R}^3_+$.
\end{proof}
\section{Bifurcation}
In this section we discuss various types of bifurcation of system (1) around different steady states.
\begin{teo}If $\frac{2aeh}{(h+x_1)^2}v^{[1]}+\frac{2pz_1}{l^2}-\frac{2p}{l}v^{[3]}\neq 0$, where $v^{[1]}=-\frac{aK}{r(h+K)}$ and $v^{[3]}=\frac{-\frac{baK}{h+K}+\frac{rcK}{k}+\frac{rpqbK}{nl}}{rn}$. Then the system (1) possesses a transcritical bifurcation 
at the equilibrium point $E_1$ 
as the parameter $m$ crosses the critical value $m^*=\frac{aeK}{h+K}-\frac{pbK}{nl}$.
\end{teo}
\begin{proof}
Let $X=(x,y,z)$ and
\begin{eqnarray}
f(X,m)=\left( \begin{matrix}
rx\left(1-\frac{x}{K}\right)-a\frac{xy}{h+x}\\
y\left(ae\frac{x}{h+x}-m-p\frac{z}{l+y}\right)\\
 x\left( b+c\frac{y}{k+y}\right)+z\left(pq\frac{y}{l+y}-n \right)\end{matrix}\right).\nonumber 
\end{eqnarray}
\begin{eqnarray}
f_m(X,m)=\left( \begin{matrix}
0\\
-y\\
0\end{matrix}\right).\nonumber 
\end{eqnarray}

\begin{eqnarray}
Df(X,m)= \left( \begin{matrix}
r-2\frac{rx}{K}-\frac{ay}{h+x}+\frac{axy}{(h+x)^2}  & -\frac{ax}{h+x} &0 \\
\frac{aehy}{(h+x)^2} & \frac{aex}{h+x}-\frac{lpz}{(l+y)^2}-m&-\frac{py}{l+y} \\
b+c\frac{y}{k+y}& \frac{ckx}{(k+y)^2}+\frac{lpqz}{(l+y)^2} & \frac{pqy}{l+y}-n \end{matrix} \right). \nonumber
\end{eqnarray}
\begin{eqnarray}
Df_m(X,m)= \left( \begin{matrix}
0 & 0 &0 \\
0 & -1 &0 \\
0& 0 & 0 \nonumber \end{matrix} \right).
\end{eqnarray}
Then
\begin{eqnarray}
f_m(E_1,m)=\left( \begin{matrix}
0\\
0\\
0\end{matrix}\right).\nonumber 
\end{eqnarray}
  
\begin{eqnarray}
A=Df(E_1,m^*):=  \left( \begin{matrix}
-r  & -\frac{aK}{h+K} &0 \\
0 & 0& 0 \\
b& \frac{cK}{k}+\frac{pqbK}{nl} & -n
\end{matrix} \right).
\end{eqnarray}
$A$  has a simple eigenvalue  $\lambda=0$ with eigenvector $v=(v^{[1]},1,v^{[3]})^T$, where $v^{[1]}=-\frac{aK}{r(h+K)}$ and $v^{[3]}=\frac{-\frac{baK}{h+K}+\frac{rcK}{k}+\frac{rpqbK}{nl}}{rn}$. Also, $A^T$ has an eigenvector $w=(0,1,0)^T$ that co\-rres\-ponds to the eigenvalue $ \lambda =
0$.

Also:
$$w^T[f_m(E_1,m^*)]=0.$$ 
\begin{eqnarray}
w^T[Df_m(X,m)v]=(0,1,0)\left[ \left( \begin{matrix}
0 & 0 &0 \\
0 & -1 &0 \\
0& 0 & 0  \end{matrix} \right) \left( \begin{matrix}
v^{[1]} \\
1 \\
v^{[3]}  \end{matrix} \right) \right]=-1\neq 0. \nonumber
\end{eqnarray}

\begin{eqnarray}
w^T[D^2f(E_1,m^*)(v,v)]&=&(0,1,0)\nonumber\\
&&\times \left( \begin{matrix}
-\frac{2r}{K} v^{[1]}v^{[1]}-\frac{2 a h}{(h+x_1)^2}v^{[1]} \\
\frac{2aeh}{(h+x_1)^2}v^{[1]}+\frac{2pz_1}{l^2}-\frac{2p}{l}v^{[3]}\\
\frac{2 c}{k}v^{[1]}+\frac{2pq}{l}v^{[3]}-\frac{2cx_1}{k^2}-\frac{2pqz_1}{l^2}  \end{matrix} \right)  \nonumber\\
 &=&\frac{2aeh}{(h+x_1)^2}v^{[1]}+\frac{2pz_1}{l^2}-\frac{2p}{l}v^{[3]}\neq 0. \nonumber
\end{eqnarray}
By Sotomayor theorem [11], the system (1) experiences a transcritical bifurcation at the equilibrium
point $E_1$ as the parameter $m$ varies through the bifurcation value $m=m^*$.
\end{proof}

\begin{teo} If $\frac{2aeh}{(h+x_1)^2}v^{[1]}+\frac{2pz_1}{l^2}-\frac{2p}{l}v^{[3]}\neq 0$, where $v^{[1]}=-\frac{aK}{r(h+K)}$ and $v^{[3]}=\frac{-\frac{b^*aK}{h+K}+\frac{rcK}{k}+\frac{rpqb^*K}{nl}}{rn}$. Then the system (1) possesses a transcritical bifurcation  
at the equilibrium point $E_1$ 
as the parameter $b$ crosses the critical 
value $b^*=\left(\frac{aeK}{h+K}-m \right)\frac{nl}{pK }$.
\end{teo}
\begin{proof}
Let $X=(x,y,z)$ and
\begin{eqnarray}
f(X,b)=\left( \begin{matrix}
rx\left(1-\frac{x}{K}\right)-a\frac{xy}{h+x}\\
y\left(ae\frac{x}{h+x}-m-p\frac{z}{l+y}\right)\\
 x\left( b+c\frac{y}{k+y}\right)+z\left(pq\frac{y}{l+y}-n \right)\end{matrix}\right).\nonumber 
\end{eqnarray}
\begin{eqnarray}
f_b(X,b)=\left( \begin{matrix}
0\\
0\\
x\end{matrix}\right).\nonumber 
\end{eqnarray}

\begin{eqnarray}
Df(X,b)= \left( \begin{matrix}
r-2\frac{rx}{K}-\frac{ay}{h+x}+\frac{axy}{(h+x)^2}  & -\frac{ax}{h+x} &0 \\
\frac{aehy}{(h+x)^2} & \frac{aex}{h+x}-\frac{lpz}{(l+y)^2}-m&-\frac{py}{l+y} \\
b+c\frac{y}{k+y}& \frac{ckx}{(k+y)^2}+\frac{lpqz}{(l+y)^2} & \frac{pqy}{l+y}-n \end{matrix} \right). \nonumber
\end{eqnarray}
\begin{eqnarray}
Df_b(E_1,b^*)= \left( \begin{matrix}
0 & 0 &0 \\
0 & -\frac{pK}{nl} &0 \\
1& \frac{pqK}{nl} & 0 \nonumber \end{matrix} \right).
\end{eqnarray}
Then
\begin{eqnarray}
f_b(E_1,b^*)=\left( \begin{matrix}
0\\
0\\
x_1
\end{matrix}\right).\nonumber 
\end{eqnarray}
  
\begin{eqnarray}
A=Df(E_1,b^*):=  \left( \begin{matrix}
-r  & -\frac{aK}{h+K} &0 \\
0 & 0& 0 \\
b^*& \frac{cK}{k}+\frac{pqb^*K}{nl} & -n
\end{matrix} \right).
\end{eqnarray}
$A$  has a simple eigenvalue  $\lambda=0$ with eigenvector $v=(v^{[1]},1,v^{[3]})^T$, where $v^{[1]}=-\frac{aK}{r(h+K)}$ and $v^{[3]}=\frac{-\frac{b^*aK}{h+K}+\frac{rcK}{k}+\frac{rpqb^*K}{nl}}{rn}$. Also,  $A^T$ has an eigenvector $w=(0,1,0)^T$ that co\-rres\-ponds to the eigenvalue $ \lambda =
0$.\\
Also:
$$w^T[f_b(E_1,b^*)]=0.$$ 
\begin{eqnarray}
w^T[Df_b(X,b)v]=(0,1,0)\left[ \left( \begin{matrix}
0 & 0 &0 \\
0 & -\frac{pK}{nl} &0 \\
1& -\frac{pqK}{nl} & 0  \end{matrix} \right) \left( \begin{matrix}
v^{[1]} \\
1 \\
v^{[3]}  \end{matrix} \right) \right]=-\frac{pK}{nl}\neq 0. \nonumber
\end{eqnarray}

\begin{eqnarray}
w^T[D^2f(E_1,b^*)(v,v)]&=&(0,1,0)\nonumber\\
&&\times \left( \begin{matrix}
-\frac{2r}{K} v^{[1]}v^{[1]}-\frac{2 a h}{(h+x_1)^2}v^{[1]} \\
\frac{2aeh}{(h+x_1)^2}v^{[1]}+\frac{2pz_1}{l^2}-\frac{2p}{l}v^{[3]}\\
\frac{2 c}{k}v^{[1]}+\frac{2pq}{l}v^{[3]}-\frac{2cx_1}{k^2}-\frac{2pqz_1}{l^2}  \end{matrix} \right)  \nonumber\\
 &=&\frac{2aeh}{(h+x_1)^2}v^{[1]}+\frac{2pz_1}{l^2}-\frac{2p}{l}v^{[3]}\neq 0. \nonumber
\end{eqnarray}
By Sotomayor theorem [11], the system (1) experiences a transcritical bifurcation at the equilibrium
point $E_1$ as the parameter $b$ varies through the bifurcation value $b=b^*$.
\end{proof}
Let $b=\tilde{b}=\frac{-A_{11}A_{22}A_{33}+A_{12}A_{21}A_{33}+A_{11}A_{23}A_{32}}{A_{12}A_{23}}-\frac{cy^*}{k+y^*}$, $X=(x,y,z)$ and
\begin{eqnarray}
f(X,b)=\left( \begin{matrix}
rx\left(1-\frac{x}{K}\right)-a\frac{xy}{h+x}\\
y\left(ae\frac{x}{h+x}-m-p\frac{z}{l+y}\right)\\
 x\left( b+c\frac{y}{k+y}\right)+z\left(pq\frac{y}{l+y}-n \right)\end{matrix}\right).\nonumber 
\end{eqnarray}
\begin{eqnarray}
f_b(X,b)=\left( \begin{matrix}
0\\
0\\
x\end{matrix}\right).\nonumber 
\end{eqnarray}

\begin{eqnarray}
Df(X,b)= \left( \begin{matrix}
r-2\frac{rx}{K}-\frac{ay}{h+x}+\frac{axy}{(h+x)^2}  & -\frac{ax}{h+x} &0 \\
\frac{aehy}{(h+x)^2} & \frac{aex}{h+x}-\frac{lpz}{(l+y)^2}-m&-\frac{py}{l+y} \\
b+c\frac{y}{k+y}& \frac{ckx}{(k+y)^2}+\frac{lpqz}{(l+y)^2} & \frac{pqy}{l+y}-n \end{matrix} \right). \nonumber
\end{eqnarray}

Then
\begin{eqnarray}
f_b(E^*,\tilde{b})=\left( \begin{matrix}
0\\
0\\
x^*
\end{matrix}\right).\nonumber 
\end{eqnarray}
  
\begin{eqnarray}
A=Df(E^*,\tilde{b}):=  \left( \begin{matrix}
A_{11}  & A_{22} &0 \\
A_{21} & A_{22}& A_{23} \\
A_{31}& A_{32} & A_{33} 
\end{matrix} \right). \nonumber
\end{eqnarray}
Also if $b=\tilde{b}$, then $a_3=0$ and the characteristic equation at $E^*$ is
$$\lambda^3 +a_1\lambda^2 +a_2 \lambda=\lambda(\lambda^2 +a_1\lambda +a_2)=0.$$
Then $A$  has a simple eigenvalue  $\lambda=0$ with eigenvector $v=(v^{[1]},v^{[2]},1)^T$, where $v^{[1]}=-\frac{A_{12}}{A_{11}}$ and $v^{[2]}=\frac{-A_{23}A_{11}}{A_{11}A_{22}-A_{12}A_{21}}$. Also,  $A^T$ has an eigenvector $w=(w^{[1]},w^{[2]},1)^T$, where $w^{[2]}=-\frac{A_{11}A_{32}-A_{31}A_{12}}{A_{11}A_{22}-A_{12}A_{21}}$ and $w^{[1]}=-\frac{A_{21}w^{[2]}}{A_{11}}-\frac{A_{31}}{A_{11}}$, that corresponds to the eigenvalue $ \lambda =0$.\\
Also:
$$w^T[f_b(E^*,b^*)]=x^*\neq 0.$$ 
\begin{eqnarray}
w^T[D^2f(E_1,b^*)(v,v)]&=&(w^{[1]},w^{[2]},1)\nonumber\\
&\times & \left( \begin{matrix}
\left(-\frac{2r}{K}+\frac{2hay^*}{(h+x^*)^3}\right) v^{[1]}v^{[1]}-\frac{2 a h}{(h+x^*)^2}v^{[1]}v^{[2]} \\
-\frac{2haey^*}{(h+x^*)^2}{v^{[1]}}^2+\frac{2aeh}{(h+x^*)^2}v^{[1]}v^{[2]}+\frac{2lpz^*}{(l+y^*)^3}{v^{[2]}}^2-\frac{2pl}{(l+y^*)^2}v^{[2]}\\
\frac{2 ck}{(k+y^*)^2}v^{[1]}v^{[2]}+\frac{2lpq}{(l+y^*)^2}v^{[2]}-\left(\frac{2ckx^*}{(k+y)^3}+\frac{2lpqz^*}{(l+y^3)^3}\right)v^{[2]}v^{[2]}  \end{matrix} \right)  \nonumber\\
 &=&w^{[1]}\left(\left(-\frac{2r}{K}+\frac{2hay^*}{(h+x^*)^3}\right) v^{[1]}v^{[1]}-\frac{2 a h}{(h+x^*)^2}v^{[1]}v^{[2]}\right)\nonumber \\
&&+w^{[2]}\left(-\frac{2haey^*}{(h+x^*)^2}v^{[1]}v^{[1]}+\frac{2aeh}{(h+x^*)^2}v^{[1]}v^{[2]}\right. \nonumber\\
&&\left.+\frac{2lpz^*}{(l+y^*)^3}v^{[2]}v^{[2]}-\frac{2pl}{(l+y^*)^2}v^{[2]}\right)+\frac{2 ck}{(k+y^*)^2}v^{[1]}v^{[2]} \nonumber\\
&&+\frac{2lpq}{(l+y^*)^2}v^{[2]}-\left(\frac{2ckx^*}{(k+y)^3}+\frac{2lpqz^*}{(l+y^3)^3}\right)v^{[2]}v^{[2]}\nonumber
\end{eqnarray}
From above analysis, we now state the following Remark.
\begin{rem}
If $b=\tilde{b}$ and\\
\begin{center}
{\small $w^{[1]}\left(\left(-\frac{2r}{K}+\frac{2hay^*}{(h+x^*)^3}\right) v^{[1]}v^{[1]}-\frac{2 a h}{(h+x^*)^2}v^{[1]}v^{[2]}\right)+w^{[2]}\left(-\frac{2haey^*}{(h+x^*)^2}v^{[1]}v^{[1]}+\frac{2aeh}{(h+x^*)^2}v^{[1]}v^{[2]}+\frac{2lpz^*}{(l+y^*)^3}v^{[2]}v^{[2]}-\frac{2pl}{(l+y^*)^2}v^{[2]}\right)+\frac{2 ck}{(k+y^*)^2}v^{[1]}v^{[2]}+\frac{2lpq}{(l+y^*)^2}v^{[2]}-\left(\frac{2ckx^*}{(k+y)^3}+\frac{2lpqz^*}{(l+y^3)^3}\right)v^{[2]}v^{[2]}\neq 0$}
\end{center}
Then By Sotomayor theorem [11], the system (1) experiences a saddle-node bifurcation at the equilibrium
point $E^*$ as the parameter $b$ varies through the bifurcation value $b=\tilde{b}$.
\end{rem}
We now investigate Hopf bifurcation around $E^*$. We consider $b$ as a bifurcation parameter and define
$$g(b)=a_1(b)a_2(b)-a_3(b)$$
Note that if $g(b)=0$, then $b=\bar{b}=-\frac{a_1 a_2+A_{11}A_{22}A_{33}-A_{12}A_{21}A_{33}-A_{11}A_{23}A_{32}}{A_{12}A_{23}}-\frac{cy^*}{k+y^*}$. Now, we will show that the Hopf bifurcation occurs for the system (1) at $ b = \bar{b} $.

\begin{teo} If there exists $b =\bar{b}$. Then the positive equilibrium point $E^*=(x^*(b), y^* (b),$ $z^*(b))$ is locally stable if $b>\bar{b}$, but it is unstable for $b<\bar{b}$ and a Hopf bifurcation of periodic solution occurs at $b=\bar{b}$.
\end{teo}
\begin{proof}
We assume that $E^*$ is locally asymptotically stable, let $$g(b) = a_1(b)a_2(b) - a_3(b).$$ 
Then $a_1(\bar{b})>0$, $g(\bar{b})=0$ and $g'(\bar{b})=A_{12}A_{23}>0$ by a similar argument to the proof of Theorem 4 in [12] the proof is completed.
\end{proof}
\section{Numerical simulations}
In this section, we will make some numerical simulations to verify the results
obtained in section 4 and give examples to illustrate theorems in section 5. In
system (1), we set: 

\begin{figure}[p]
\centering
\includegraphics[scale=.47]{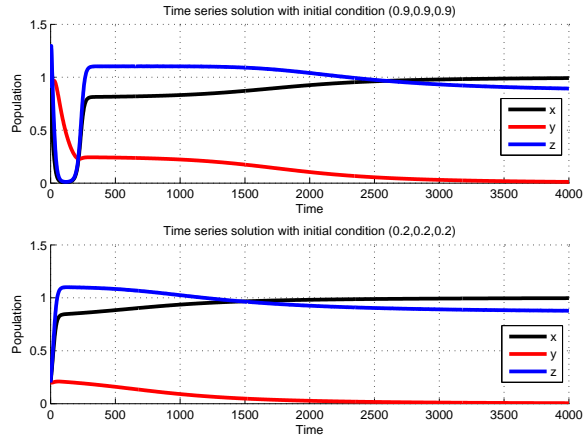}
\caption{Numerical simulation of (1) indicate $E_1$ is locally asymptotically stable.}
\end{figure}

\begin{figure}[p]
\centering
\includegraphics[scale=.47]{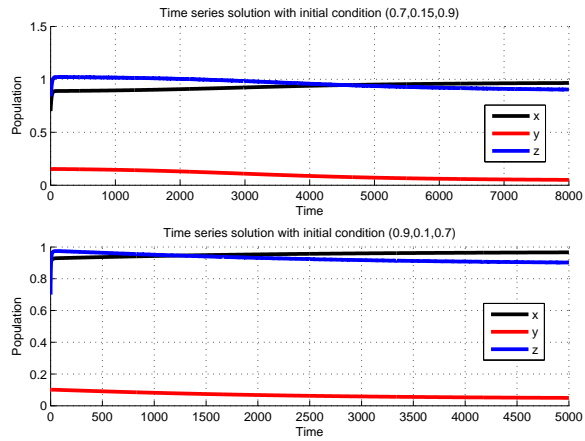}
\caption{Numerical simulation of (1) indicate $E^*\approx$(0.9707,0.0431,0.8908) is locally asymptotically stable.}
\end{figure}

\begin{figure}[p]
\centering
\includegraphics[scale=.47]{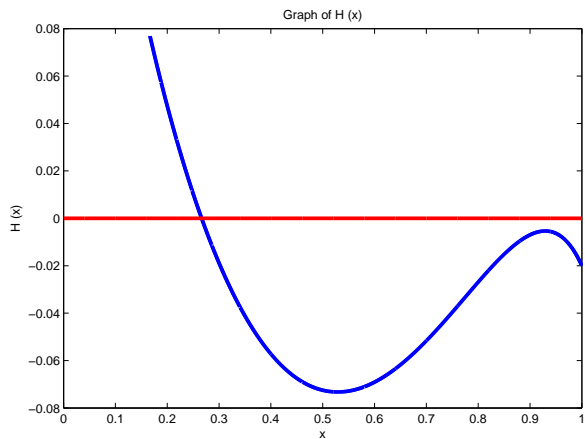}
\caption{Graph of $ H (x) $ indicate that $E^*\approx$(0.2664,0.5622,0.4147) is the only equilibrium point in the interior of $ \Omega $.}
\end{figure}

\begin{figure}
\centering
\includegraphics[scale=.47]{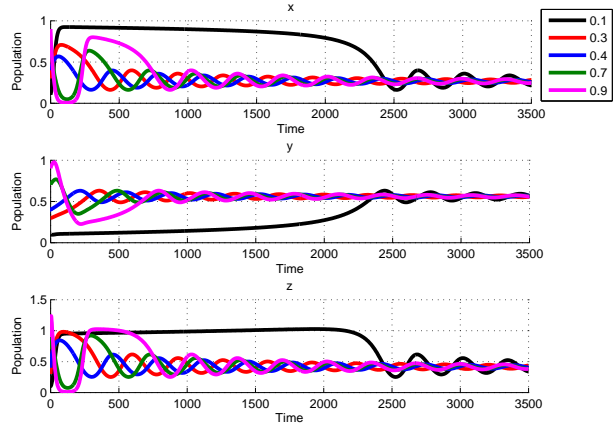}
\caption{Numerical simulation of (1) indicate $E^*\approx$(0.2664,0.5622,0.4147) is globally asymptotically stable.}
\end{figure}

\begin{figure}
\centering
\includegraphics[scale=.47]{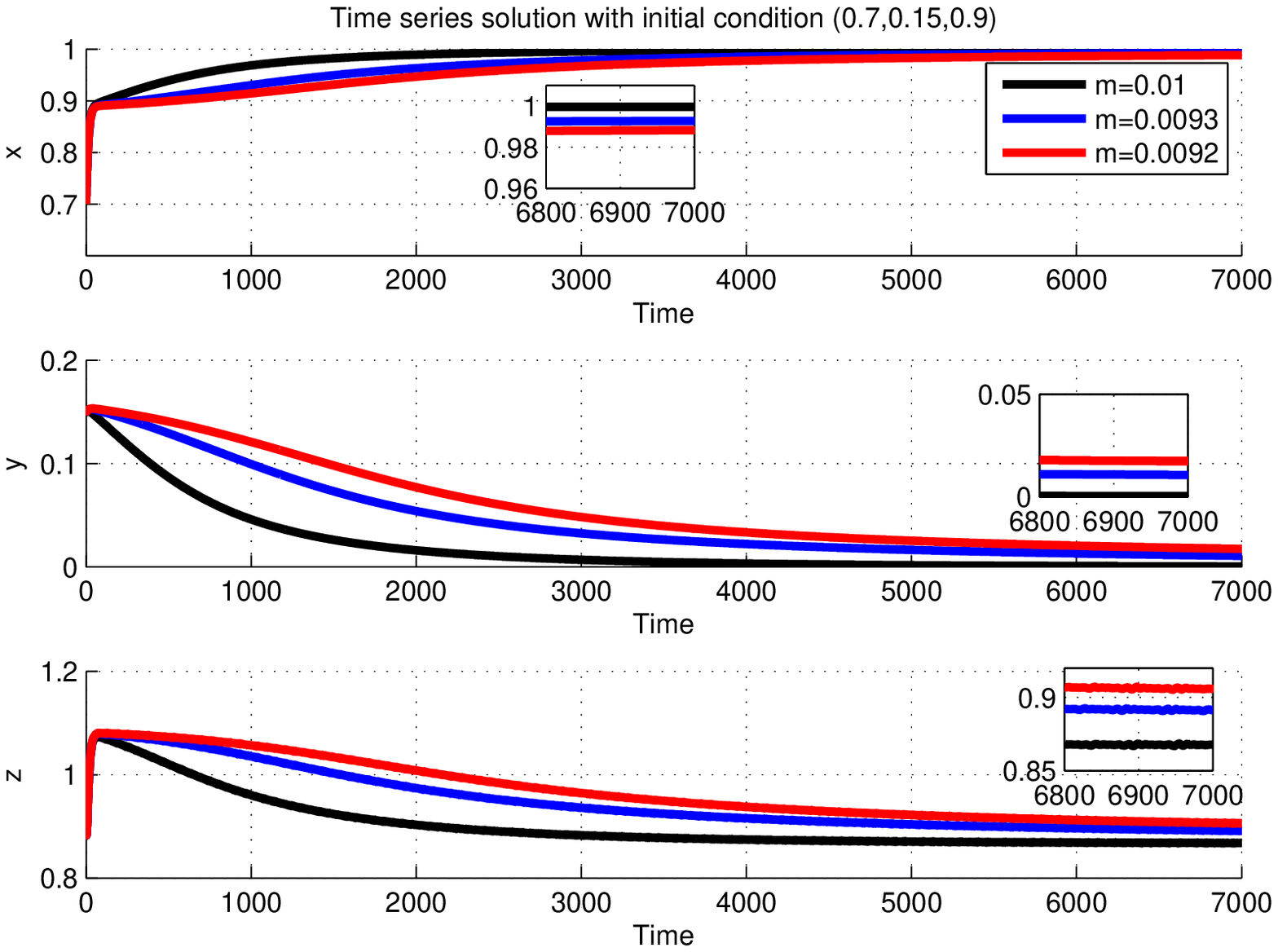}
\caption{Solutions of (1) shows transcritical bifurcation around the equilibrium point $E_1$ when $m =$ 0.00933.}
\end{figure}

\begin{figure}
\centering
\includegraphics[scale=.47]{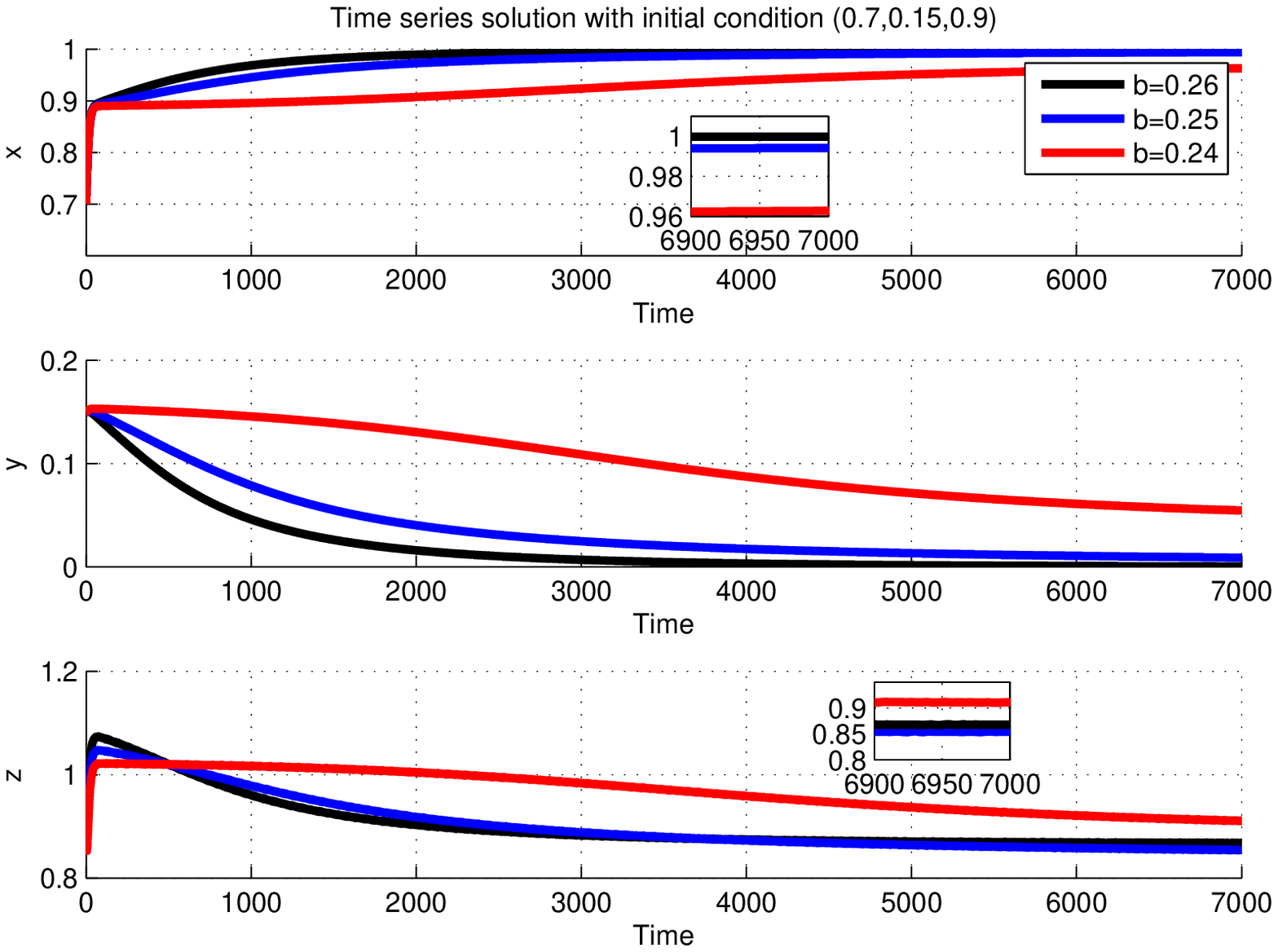}
\caption{Solutions of (1) shows transcritical bifurcation around the equilibrium point $E_1$ when $b =$ 0.25.}
\end{figure}

\begin{figure}
\centering
\includegraphics[scale=.7]{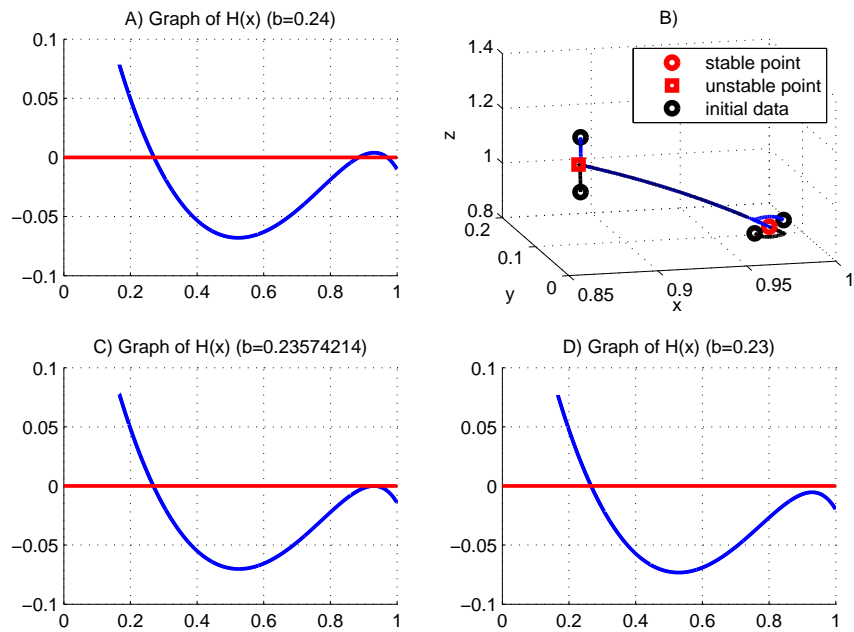}
\caption{In A) we observe that for $ b = 0.24$ there are 2 equilibrium points, in (B) it is observed that one of the points is stable and the other is unstable, in (C and D) we notice that Saddle-node bifurcation occurs in $ b =0.23574214 $}
\end{figure}

\begin{figure}
\centering
\includegraphics[scale=.7]{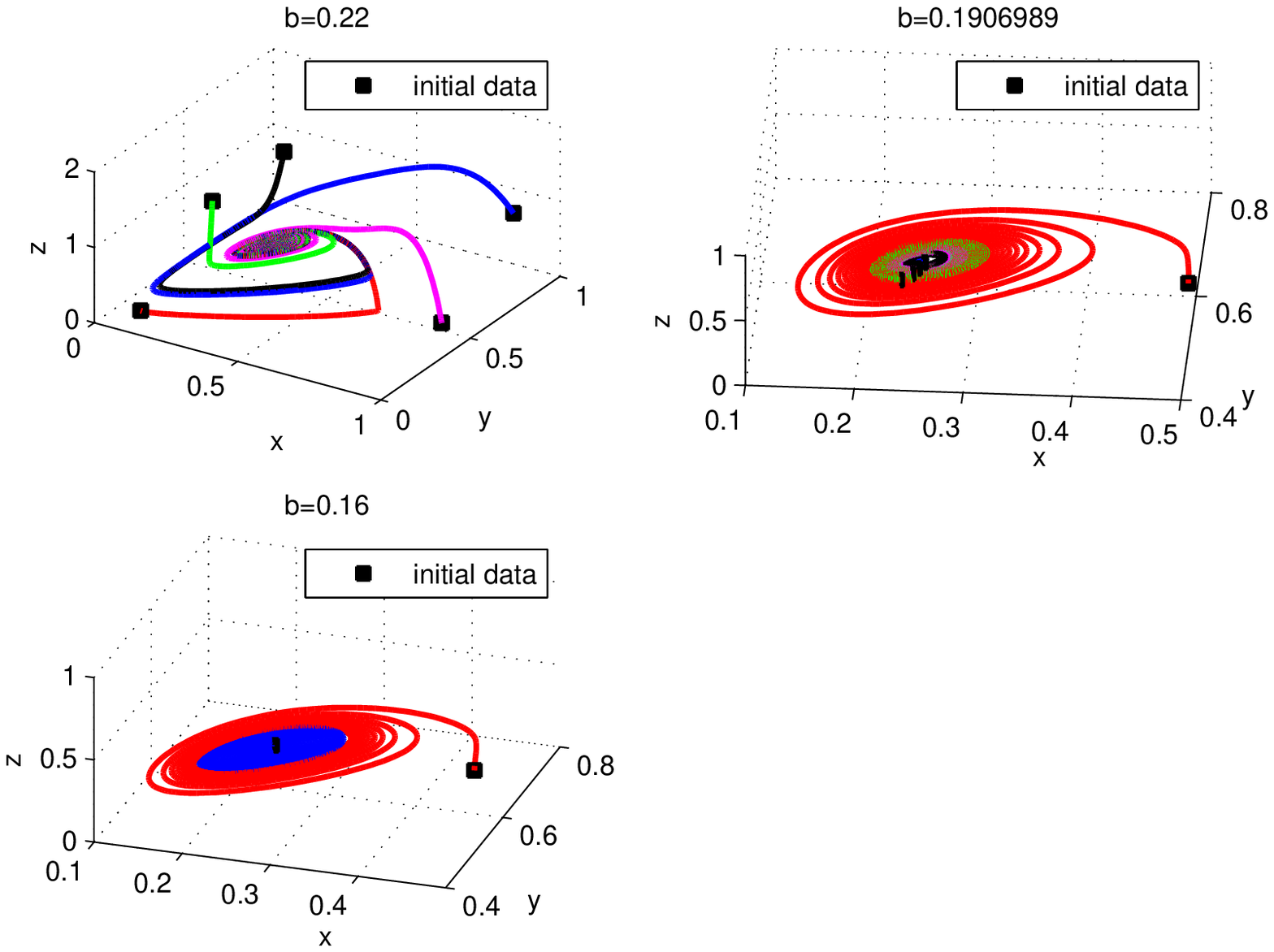}
\caption{Hopf bifurcation occurs at $b =\bar{b} \approx$0.1906989.}
\end{figure}

\begin{center}
$r=$0.1, $K=$1, $h=$0.5, $a=$0.1, $e=$0.4, $m=0.01$, $p=$0.01, $l=$0.5, $c=$0.44, $k=$0.5, $q=$0.5 and $n=$0.3. 
\end{center}
\begin{eje}
In system (1), we set $b=$0.26, then $\frac{pbK}{nl} +m =$0.0273 and $\frac{aeK}{h+K}=$0.0267. By theorem 2, $E_1 = (K, 0, \frac{bK}{n}) \approx$(1,0,0.8667) is locally asymptotically stable, see Figure 3.
\end{eje}

\begin{eje}
In system (1), we set $b =$0.24, then $\frac{pbK}{nl} +m =$0.026 and $\frac{aeK}{h+K} =$0.0267. Then   $a_1 =$0.3934, $a_2 =$0.0286, $a_3 =$ 1.16$\times 10^{-5}$ and $a_1a_2-a_3 =$0.0112. By Remark 1, $E^*\approx$(0.9707,0.0431,0.8908) is locally asymptotically stable, see Figure 4.
\end{eje}

\begin{eje}
In system (1), we set $K=$1, $b=$0.23, $c=$0.44, $m=$0.01 and $e=$0.4. We have that $ H (x) $ has a only root in the interval $\left(\frac{mh}{ae-m},1 \right)$ (see Figure 5), then $E^*\approx$(0.2664,0.5622,0.4147) is the only equilibrium point in the interior of $ \Omega $. Besides, we choose $\eta=$0.2, $\alpha=$4 and $\beta=\zeta=1$, then $\frac{aeK}{h+K} =$0.0267, $\frac{pbK}{nl} +m=0.0253$, $N_{11}=$0.1027, $N_{12}=$-0.0286, $N_{21}=$1.0561, $N_{22}=$-0.2395, $N_{23}=$-0.0286,  $N_{31}=$-0.3557, $N_{32}=$0.0408, $N_{33}=$-0.2787 and $L=\{\text{-0.0116,-0.0040,-0.3265}\}$. By theorem 5, $E^*$ is globally asymptotically stable, see Figure 6.
\end{eje}

\begin{eje}
In system (1), we set $b =$0.26. If we increase the value of the parameter $m$ and keeping all other parameters values fixed, we observe that transcritical bifurcation arises when $m^* =$ 0.00933, see Figure 7.
\end{eje}

\begin{eje}
In system (1). If we increase the value of the parameter $b$ and keeping all other parameters values fixed, we observe that transcritical bifurcation arises when $b^*=$ 0.25, see Figure 8.
\end{eje}

\begin{eje}
In system (1) we observe that if $b=0.24$ then  $E^*_1\approx$(0.9707,0.0431,0.8908) is locally asymptotically stable and $E^*_2\approx$(0.8852,0.1591,1.0256) is unstable. Also if we increase the value of the parameter $b$ and keeping all other parameters values fixed, we observe that saddle-node bifurcation occurs at $b =\tilde{b} \approx$0.23574214, see Figure 9.
\end{eje}

\begin{eje}
In system (1). If we increase the value of the parameter $b$ and keeping all other parameters values fixed, we observe that Hopf bifurcation arises when $b =$0.1906989, see Figure 10.
\end{eje}
\section{Discusssion}
In this paper we have considered a mathematical model to describe the tritrophic interaction between crop, pest and the pest natural enemy, in which the release of
Volatile Organic Compounds (VOCs) by crop is explicitly taken into account. We
obtained three equilibrium points:
\begin{itemize}
\item[•]The ecosystem collapse is at point $E_0 = (0, 0, 0)$.
\item[•]The aphid-free is at point $E_1 =\left(K,0,\frac{b}{n}K \right)$.
\item[•]The coexistence is at point $E^*$.
\end{itemize}
We have investigated the topics of existence and non-existence of various equilibria and their stabilities. More precisely, we have proved the following:
\begin{itemize}
\item[•]$E_0 = (0, 0, 0)$ is unstable.
\item[•]If $\frac{aeK}{h+K}<\frac{pbK}{nl}+m$, then $E_1 = (K, 0,\frac{b}{n}K)$
is locally asymptotically stable.
If $\frac{aeK}{h+K}>\frac{pbK}{nl}+m$, then $E_1$ is unstable.
\item[•]$E^*$ it is locally asymptotically stable if $r+\frac{ax^*y^*}{(h+x^*)^2}<2\frac{rx^*}{K}+\frac{ay^*}{h+x^*}$, $\frac{aex^*}{h+x^*}<\frac{lpz^*}{(l+y^*)^2+m}$ and $\frac{pqy^*}{l+y^*}<n$ and $-A_{11}A_{22}A_{33}-A_{12}A_{23}A_{31} + A_{12}A_{21}A_{33} +
A_{11}A_{23}A_{32} > 0$ or $a_i > 0$ for $i =$ 1, 2, 3 and $a_1a_2 - a_3 > 0$.
\end{itemize}
We also show the global stability of the positive equilibrium by high-dimensional Bendixson criterion. We used the Sotomayor's theorem to ensure the existence of saddle-node bifurcation and transcritical bifurcation (this type of bifurcation transforms a herbivore free equilibrium point from stable situation to a unstable). In this paper, we have chosen the parameters $m$ and $b$ arbitrarily to obtain this type of bifurcation. From Hopf bifurcation analysis we observed that $b$ (the attraction constant due to VOCs.) decreasing destabilizes the system.

Thus, $ b $ is an important parameter for our model, because the aphid-free point ($E_1$) is locally asymptotically stable for $b$ sufficiently large. We also found three critical values for b ($ b^* $, $ \tilde{b} $ and $\bar {b}$) and we got that
\begin{itemize}
\item[•] If $b>b^*$, then $E_1$ is locally asymptotically stable and If $b<b^*$, then $E_1$ is unstable.
\item[•] If $b=b^*$, then a transcritical bifurcation occurs.
\item[•] If $\tilde{b}<b<b^*$, then there are 2 positive equilibrium points $ E_1 ^ * $ (locally asymptotically stable) and $ E_2 ^ * $ (unstable).
\item[•] If $b=\tilde{b}$, then a saddle-node bifurcation occurs.
\item[•] $\bar{b}<b<\tilde{b}$, then there is only one positive equilibrium point $E^*$ that is globally asymptotically stable. 
\item[•] $b=\bar{b}$,  then a Hopf bifurcation occurs.
\item[•] $b<\bar{b}$, then there is only one positive equilibrium point $E^*$ that is unstable. 
\end{itemize}
Therefore,  VOCs  possess a beneficial effect on the environment since their release may be able to stabilize the model dynamics. This could reduce the use of synthetic pesticides. 

\textbf{Acknowledgments} This work was supported by Sistema Nacional de Investigadores (15284) and Conacyt-Becas.
\section*{References}
\begin{itemize}
\item[1)]B. Buonomo, F. Giannino, S. Saussure and E. Venturino. Effects of limited volatiles release by plants in tritrophic interactions. Mathematical Biosciences and Engineering, 16(2019), 3331-3344.
\item[2)]F. Brilli, F. Loreto and I. Baccelli. Exploiting Plant Volatile Organic Compounds (VOCs) in Agriculture to Improve Sustainable Defense Strategies and Productivity of Crops. Frontiers In Plant Science, 10(2019):264.
\item[3)]J. Takabayashi and M. Dicke. Plant—carnivore mutualism through herbivore-induced carnivore attractants. Trends In Plant Science, 1(1996), 109-113. 
\item[4)]L. Tollsten, P. Mller. Volatile organic compounds emitted from beech
leaves. Phytochemistry, 43(1996), 759-762.
\item[5)]D. Mukherjee. Dynamics of defensive volatile of plant modeling tritrophic
interactions. International Journal of Nonlinear Science 25(2018), 76-86.
\item[6)]R. Mondal, D. Kesh and D. Mukherjee. Role of Induced Volatile Emission Modelling Tritrophic Interaction. Differential Equations And Dynamical Systems (2019). 
\item[7)]R. Mondal, D. Kesh and D. Mukherjee. Influence of induced plant volatile and refuge in tritrophic model. Energy Ecology and Environment, 3(2018), 171–184
\item[8)] M. Li and J. Muldowney. A geometric approach to global-stability problems.
SIAM Journal on Mathematical Analysis, 27(1996), 1070-1083.
\item[9)]H. I. Freedman and P. Waltman. Persistence in models of three interacting
predator-prey populations. Mathematical Biosciences, 68(1984), 213-231.
\item[10)]G. Butler, H. Freedman and P. Waltman. Uniformly persistent systems.
 Proceedings of the American Mathematical Society, 96(1986), 425-430.
\item[11)]L. Perko. Differential equations and dynamical systems. Springer Science \&
Business Media, 7(2013).
\item[12)]D. Mukherjee. The effect of refuge and immigration in a predator–prey system in
the presence of a competitor for the prey. Nonlinear Analysis: Real World Applications, 31(2016), 277–287.
\end{itemize}
\end{document}